\documentclass[
 aip,
 amsmath,amssymb,
 reprint,floatfix]{revtex4-1}
\bibliographystyle{apsrev4-1}

\usepackage{graphicx}
\usepackage{dcolumn}
\usepackage[mathcal]{euscript}
\usepackage{bm}
\usepackage[utf8]{inputenc}
\usepackage[T1]{fontenc}
\usepackage{mathptmx}
\usepackage{etoolbox}
\usepackage{amsfonts}
\usepackage{amssymb}
\usepackage{amsthm}
\usepackage{amssymb}
\usepackage{float}
\usepackage[all]{xy}
\usepackage{graphicx}
\usepackage{changes}
\usepackage{hyperref}
\usepackage{bbm}
\usepackage{xcolor}
\usepackage[caption=false]{subfig}
\usepackage{subcaption} 
\usepackage{cancel} 

\newcommand{\xb}{\boldsymbol x}
\newcommand{\yb}{\boldsymbol y}
\newcommand{\Fc}{\mathcal F}
\newcommand{\Kc}{\mathcal K}
\newcommand{\gb}{\boldsymbol g}

\newcommand{\psib}{\boldsymbol \psi}
\newcommand{\Psib}{\boldsymbol \Psi}
\newcommand{\Cb}{\boldsymbol C}

\newcommand{\Kb}{\boldsymbol K}
\newcommand{\xib}{\boldsymbol \xi}
\newcommand{\wb}{\boldsymbol w}
\newcommand{\vb}{\boldsymbol v}
\newcommand{\Bb}{\boldsymbol B}
\newcommand{\Mb}{\boldsymbol M}
\newcommand{\Jb}{\boldsymbol J}
\newcommand{\Ib}{\boldsymbol I}
\newcommand{\Ab}{\boldsymbol A}
\newcommand{\Xib}{\boldsymbol \Xi}
\newcommand{\Lambdab}{\boldsymbol \Lambda}
\newcommand{\Wb}{\boldsymbol W}
\newcommand{\Db}{\boldsymbol D}
\newcommand{\ub}{\boldsymbol u}
\newcommand{\omegab}{\boldsymbol \omega}
\newcommand{\Ob}{\boldsymbol 0}
\newcommand{\deltab}{\boldsymbol \delta}

\theoremstyle{plain}
\newtheorem{theorem}{Theorem}[section]

\newtheorem{algorithm}[theorem]{Algorithm}

\newtheorem{proposition}[theorem]{Proposition}
\newtheorem{remark}[theorem]{Remark}

\makeatletter
\def\@email#1#2{%
 \endgroup
 \patchcmd{\titleblock@produce}
  {\frontmatter@RRAPformat}
  {\frontmatter@RRAPformat{\produce@RRAP{*#1\href{mailto:#2}{#2}}}\frontmatter@RRAPformat}
  {}{}
}%
\makeatother
\begin{document}

\preprint{AIP/123-QED}

\title[Featurizing Koopman Mode Decomposition]{Featurizing Koopman Mode Decomposition for Robust Forecasting}
\author{David Aristoff}
\affiliation{Colorado State University, Fort Collins, CO, 80523, USA}
\thanks{Author to whom correspondence should be addressed: \url{aristoff@colostate.edu}}
\author{Jeremy Copperman}
\affiliation{Oregon Health \& Science University, Cancer Early Detection Advanced Research Center, Knight Cancer Institute, Portland, OR, 97201, USA}
\author{Nathan Mankovich}
 \affiliation{University of Valencia, Val{\`e}ncia, 46010, Spain}
\author{Alexander Davies}
\affiliation{Oregon Health \& Science University, Cancer Early Detection Advanced Research Center, Knight Cancer Institute, Portland, OR, 97201, USA}
\affiliation{Oregon Health \& Science University, Division of Oncological Science, Knight Cancer Institute, Portland, OR, 97201, USA}

\date{\today}

\begin{abstract}
This article introduces an advanced Koopman mode decomposition (KMD) technique -- coined {\em Featurized Koopman Mode Decomposition} (FKMD) -- that uses delay embedding and a learned Mahalanobis distance to enhance analysis and prediction of high dimensional dynamical systems. The delay embedding expands the observation space to better capture underlying manifold structure, while the Mahalanobis distance adjusts observations based on the system's dynamics. This aids in {\em featurizing} KMD in cases where good features are not a priori known. We show that FKMD improves predictions for a high-dimensional linear oscillator, a high-dimensional Lorenz attractor that is partially observed, and a cell signaling problem from cancer research. 
\end{abstract}

\maketitle

\section{Introduction}






Koopman mode decomposition~\cite{mezic2005spectral,mezic2021koopman} (KMD) has emerged as a powerful tool for analyzing nonlinear dynamical systems. The power of KMD comes from lifting the nonlinear dynamics into a vector space of feature functions; 
the evolution on this space is described by the linear {\em Koopman operator}\cite{koopman1931hamiltonian,koopman2004modeling}. Through this trick, KMD can identify patterns and coherent structures that evolve linearly in time. 

KMD enables both quantitative predictions and qualitative analysis of dynamics~\cite{williams2015data,tu2013dynamic}. The framework for nonlinear features was introduced by Williams {et al}~\cite{williams2015data}. Since then, KMD has been kernelized~\cite{kevrekidis2016kernel}, integrated with control theory~\cite{proctor2016dynamic}, sped up with random Fourier features~\cite{degennaro2019scalable}, used with time delay embeddings~\cite{kamb2020time}, viewed from the perspective of Gaussian processes~\cite{kawashima2022gaussian}, and imposed with physical constraints~\cite{baddoo2023physics}. KMD has  been widely applied, including in infectious disease control~\cite{koopman2004modeling}, video~\cite{erichson2019compressed}, neuroscience~\cite{brunton2016extracting}, fluid dynamics~\cite{mezic2013analysis,bagheri2013koopman,arbabi2017study}, molecular dynamics~\cite{wu2017variational,klus2020data}, and climate science~\cite{navarra2021estimation}. For recent advances, challenges, and open problems in data-driven Koopman learning, see~\cite{brunton2021modern,bevanda2021koopman,colbrook2023multiverse}.

Kernel KMD, which uses kernel features, is a natural choice when system-specific feature functions are unknown~\cite{kevrekidis2016kernel}. The choice of kernel can have a large effect on the quality of KMD. The most commonly used kernels are isotropic Gaussian or Mat{\'e}rn kernels~\cite{genton2001classes}, which give uninformative measures of distance in high dimension. 
Artificial neural networks are natural competitors to KMD that can overcome this curse of dimensionality, but they cannot identify linearly evolving 
structures and require tuning over many hyperparameters.


We propose a novel method called {\em Featurized Koopman Mode Decomposition} (FKMD). 
Our method featurizes KMD by learning a Mahalanobis distance-based kernel~\cite{radhakrishnan2022feature}. 
This kernel prioritizes  dynamically important directions by enforcing isotropic changes in space and time (see Theorem~\ref{thm:main}). 
This mitigates the curse of dimensionality,  
leading to improvements over ordinary Gaussian kernel KMD. 


FKMD includes three key ingredients: (i) kernels that use a learned Mahalanobis distance; (ii) nonstandard delay embeddings; and (iii)  efficient implementation with random Fourier features. Delay embeddings, which extend data arrays by including time history, allow reconstruction of underlying manifolds~\cite{kamb2020time,takens2006detecting}. 
While delay embedding of features has been introduced in~\cite{arbabi2017ergodic}, 
we apply an additional  embedding to the samples. The Mahalanobis distance finds 
appropriate time correlation structure between these delay-embedded samples, while random Fourier features enable fast computations~\cite{degennaro2019scalable}.

In sum, the \textbf{contributions} of this work are: 
\begin{itemize}
    \item {\em We introduce a new method, FKMD,} that learns features of high dimensional, delay-embedded data by encoding them in a Mahalanobis distance, leading to more effective KMD analysis and inference. We 
    show how to integrate our method with random Fourier features~\cite{rahimi2007random,nuske2023efficient}  to handle large datasets.

    \item {\em We illustrate the power of FKMD} in  three experiments. The first experiment shows how FKMD improves forecasting on a large system of linear differential equations with effective low dimensionality. In the second experiment, FKMD accurately predicts evolution of a high-dimensional Lorenz attractor~\cite{lorenz1996predictability} despite training data that is low-dimensional and noisy. Our last experiment applies FKMD to cancer cell imaging, predicting cell-signaling patterns hours into the future. 
\end{itemize}

\begin{table}[ht]
    \caption{Definitions of symbols used in this work.}
    \label{tab:symbol-definitions}
    \centering
    \begin{tabular}{c | @{\hspace{1em}}l@{}}
        Symbol & Definition \\ [0.5ex] \hline
                $\xb(t)$ & system state at time $t$ \\
        $\xb$, $\xb'$ & states, or sample points \\
        $\gb(\xb)$ & $1 \times L$ real observation function \\
        $\nabla \gb(\xb)$ & {\em transpose} of Jacobian matrix; has $L$ columns \\
              $\tau$ & evolution time step, or lag\\
        $\Fc_\tau(\xb)$ & evolution map at lag $\tau$ \\
        $\Kc_\tau(\gb)$ & Koopman operator at lag $\tau$\\
         $N$ & number of samples \\
         $R$ & number of features \\
        $\xb_1,\ldots,\xb_N$ & input samples \\
        $\yb_1,\ldots,\yb_N$ & output samples; $\yb_n = \Fc_\tau(\xb_n)$  \\
        $\psi_1(\xb),\ldots,\psi_R(\xb)$ & scalar-valued feature functions\\
        $\psib = \begin{bmatrix}\psi_1 & \ldots & \psi_R\end{bmatrix}$ & $1 \times R$ vector of feature functions \\
        $\Psib_{\xb}$ & $N \times R$ input samples $\times$ features matrix  \\
        $\Psib_{\yb}$ & $N \times R$ output samples $\times$ features matrix  \\
       $\Kb$ & $R \times R$ Koopman matrix in feature space  \\ 
         $\Bb$ & $R \times L$ observation matrix in feature space\\ 
       $\phi_m(\xb)$ & scalar-valued Koopman eigenfunctions \\
       $\vb_m^\ast$ & $1 \times L$ Koopman modes \\
       $\mu_m$ & Koopman eigenvalues \\
       $\lambda_m = \tau^{-1} \log \mu_m$ & continuous-time Koopman eigenvalues \\
       $k_{\Mb}(\xb,\xb')$ & kernel function \\
       $\Mb^{1/2}$ & change of variables matrix \\
       $\Ab$ & matrix defining a system of linear diff eqs \\
       $\Ib$ & identity matrix \\
       $\tilde{\xb}_n$, $\tilde{\yb}_n$, $\tilde{\gb}$, $\tilde{\Fc}_\tau$ & $\xb_n$, $\yb_n$, $\gb$, and $\Fc_\tau$ in the changed variables
    \end{tabular}
\end{table}

\section*{Overview of Koopman mode decomposition}

We consider 
a dynamical 
system in real Euclidean space, with evolution map $\Fc_\tau$. Given the current 
state, $\xb(t)$, the 
state at time $\tau$ into the future 
is $\Fc_\tau(\xb(t))$. That is, 
\begin{equation}\label{eq:forward_operator}
     \xb(t+\tau) = \Fc_\tau (\xb(t)).
\end{equation}

In realistic application problems, $\Fc_\tau$ is typically a complicated nonlinear 
function, {\em e.g.} a stochastic or ordinary differential equation time step, or simply a black box mapping from which some data is measured. 

The 
{\em Koopman operator} provides a dual interpretation of evolution which is {\em linear} (see Figure~\ref{fig:koopman_concept}). Namely, for an observation function $\gb(\xb)$ on the system states, the {\em Koopman operator}~\cite{mezic2013analysis,brunton2021modern,mauroy2020koopman} determines the observations at time $\tau$ in the future:  
\begin{equation}\label{eq:Koopman-definition}
 \Kc_\tau(\gb)(\xb) := \gb(\Fc_\tau(\xb)).
\end{equation}
 
 Here, $\Kc_\tau$ is a linear operator that exactly describes the evolution map $\Fc_\tau$.  
 In principle, $\gb$ could be any measurement or quantity of interest. In our experiments below, we define observations using either the full state or  delay embeddings of low-dimensional observations.

While the linear framework does not remove the complexity inherent in $\Fc_\tau$, it provides a starting point for globally linear techniques: we can apply linear analysis in~\eqref{eq:Koopman-definition} without resorting to 
local linearization of~\eqref{eq:forward_operator}. From this point of view, we can construct finite-dimensional approximations of $\Kc_\tau$ by choosing a collection of {\em feature functions} ~\cite{bishop2006pattern}, denoted $\psi_r$, that are 
evaluated at {\em sample points}. 

To this end, we choose scalar-valued features $$\psib(\xb) = \begin{bmatrix}\psi_1(\xb) & \ldots & \psi_R(\xb)\end{bmatrix},$$ 
and obtain a set of input and output sample points $\xb_1,\ldots,\xb_N$ and $\yb_1,\ldots,\yb_N$, where $\yb_n = \Fc_\tau(\xb_n)$. From these we form $N \times R$ matrices $\Psib_{\xb}$ and $\Psib_{\yb}$ whose rows are samples and columns are features:  
\begin{equation}\label{eq:Psix}
    \Psib_{\xb} = \begin{bmatrix} \psi_1(\xb_1) & \ldots &\psi_R(\xb_1) \\ 
    \vdots &  & \vdots \\
    \psi_1(\xb_N) & \ldots &\psi_R(\xb_N)
    \end{bmatrix}
\end{equation}
and 
\begin{equation}\label{eq:Psiy}
    \Psib_{\yb} = \begin{bmatrix} \psi_1(\yb_1) & \ldots &\psi_R(\yb_1) \\ 
    \vdots &   &\vdots \\
    \psi_1(\yb_N) & \ldots& \psi_R(\yb_N)
    \end{bmatrix}.
\end{equation}

 A finite-dimensional approximation, $\Kb$, of the Koopman operator should, up to estimation errors, satisfy
 \begin{equation}\label{eq:Koopman-solve}
     \Psib_{\xb} \Kb = \Psib_{\yb}.
 \end{equation}
 Here $\Kb$ is a $R \times R$ matrix, and this is a linear system that can be solved with standard methods like ridge regression. We think of $\Kb$ as acting in the {\em feature space}.

The non-linear evolution of system states  (equation~\eqref{eq:forward_operator}) and the corresponding linear Koopman operator on feature functions (equation~\eqref{eq:Koopman-definition}) are summarized in Figure~\ref{fig:koopman_concept}, which was inspired by Williams et al.~\cite{williams2015data}.

\begin{figure}[ht!]
    \centering
    \includegraphics[width = \linewidth]{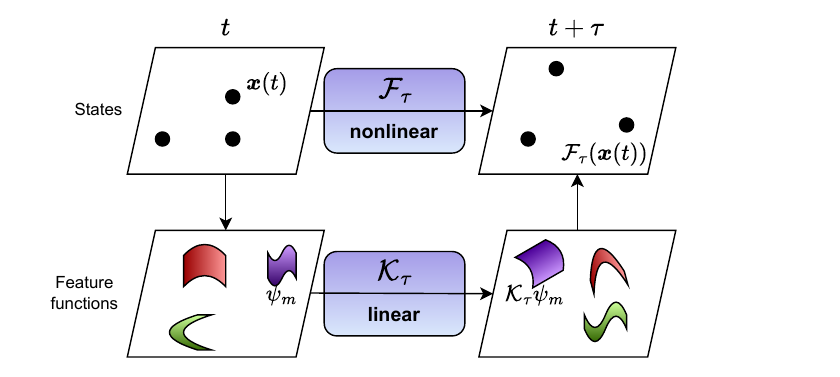}
    \caption{Illustration of the evolution map and Koopman operator. The evolution map, $\Fc_\tau$, and the Koopman operator $\mathcal{K}_\tau$, respectively evolve states ({\em e.g.}, $\xb(t)$) and features ({\em e.g.}, $\psi_m$) $\tau$ time steps in the future. The evolution of features (bottom row) is {\em linear}.}
    \label{fig:koopman_concept}
\end{figure}

If $\gb$ is a $1 \times L$ vector-valued function, we also express 
 $\gb$ in feature space coordinates as a $R \times L$ matrix $\Bb$:
 \begin{equation}\label{eq:B-solve}
   \Psib_{\xb} \Bb =     \begin{bmatrix} \gb(\xb_1) \\ \vdots \\ \gb(\xb_N) \end{bmatrix}.
 \end{equation}
Note that~\eqref{eq:B-solve} can be solved in the same manner as~\eqref{eq:Koopman-solve}.
 
 Koopman mode decomposition
 converts an eigendecomposition of $\Kb$
 back to the {\em sample space},  to interpret and/or predict 
 the dynamics defined by $\Fc_\tau$. 
 To this end, write the eigendecomposition of $\Kb$ as 
 \begin{equation}\label{eq:Koopman-decomposition}
     \Kb = \sum_{m=1}^R \mu_m \xib_m \wb_m^\ast,
 \end{equation}
 where $\mu_m$ are the eigenvalues of $\Kb$, and $\xib_m$, $\wb_m$ are the right and left eigenvectors, respectively, 
 scaled so that $\wb_m^\ast \xib_m = 1$. That is, $\Kb \xib_m = \mu_m \xib_m$ and $\wb_m^\ast \Kb = \mu_m \wb_m^\ast$.
By converting this to sample space, it can be shown that\cite{williams2015data} 
 \begin{equation}\label{eq:estimate-observable}
     \Kc_\tau(\gb)(\xb) \approx \sum_{m=1}^R e^{\tau\lambda_m} \phi_m(\xb) \vb_m^\ast,
 \end{equation}
 with $e^{\tau \lambda_m} = \mu_m$ the {\em Koopman eigenvalues},  $\phi_m(\xb) = \psib(\xb)\xib_m$  the {\em Koopman eigenfunctions}, and $\vb_m^\ast = \wb_m^\ast \Bb$ the {\em Koopman modes}. The right-hand side of~\eqref{eq:estimate-observable} is a finite-dimensional approximation of the Koopman operator at lag $\tau$ evaluated on $\gb$. 
 See  Appendix~\ref{appendix:inference} for a derivation of equation~\eqref{eq:estimate-observable}.
 
 With the Koopman eigenvalues, Koopman eigenfunctions, and Koopman modes in hand, equation~\eqref{eq:estimate-observable} can be used to predict observations 
 of the system at future times, as well 
 as analyze qualitative behavior. 
 There has been much work in this direction; we will not give a complete review, but refer to~\cite{williams2015data,arbabi2017ergodic} for the basic ideas and to~\cite{navarra2021estimation,baddoo2022kernel,nuske2023finite,wu2017variational,klus2020data,klus2020kernel} for recent applications and extensions.
 Of course, the quality of the approximation in~\eqref{eq:estimate-observable} is sensitive to the 
 choice of features and sample 
 space. With enough features 
 and samples, actual equality in~\eqref{eq:estimate-observable} 
 can be approached~\cite{williams2015data,arbabi2017ergodic}. 
 In realistic applications, 
 samples and features are limited 
 by computational constraints.

 \section{Methods}

 \subsection{Overview}

 We make two data-driven choices that can give 
 remarkably good results on complex systems. These choices are:
 \begin{itemize}

     \item[(i)] We learn a change of variables $\xb \to \Mb^{1/2} \xb$ to help define features. The matrix $\Mb$ is updated iteratively and reflects the 
     underlying system's dynamics.

     \item[(ii)] We use a ``double'' delay embedding to construct feature space, where both the sample points $\xb_n$ and the features $\psi_m$ are delay embedded.

 \end{itemize}

  Our features are based on kernels~\cite{navarra2021estimation,klus2020kernel}. Kernel features are a common choice when system-specific feature functions are not {\em a priori} known. 
 The kernels are centered around the sample points, \begin{equation}\label{eq:features}
         \psi_m(\xb) = k_{\Mb}(\xb,\xb_m), \qquad m=1,\ldots,R.
     \end{equation}
     Here, $R=N$ and 
     $k_{\Mb}$ is the kernel function
\begin{equation}\label{eq:kernel}
         k_{\Mb}(\xb,\xb') = \exp\left[-\frac{1}{2}|\Mb^{1/2} (\xb-\xb')|^2\right]
     \end{equation}
     defined using the Mahalanobis distance, $|\Mb^{1/2}(\xb-\xb')|^2 = {(\xb-\xb')^*\Mb(\xb-\xb')}$, between pairs $\xb,\xb'$. 
     In Section~\ref{sec:scaling_up}, we show how to scale up to large sample size $N> 10^5$ by using $R \ll N$ random Fourier features that estimate these kernels.

 \subsection{Change of variables}\label{sec:transformation}

Inspired by the recent work~\cite{radhakrishnan2022feature} on understanding neural networks and improving kernel methods, we target a matrix $\Mb$ using a gradient outer product structure~\cite{li1991sliced,trivedi2014consistent,radhakrishnan2022feature}. Up to a scalar bandwidth factor, we use
     \begin{equation}\label{eq:grad-outerproduct}
         \Mb = \frac{1}{N}\sum_{n=1}^N \Jb(\xb_n) \Jb(\xb_n)^\ast,
     \end{equation}
   with the ideal $\Jb$ given by
     \begin{equation}\label{eq:Jexact}
\Jb(\xb) = \lim_{\tau \to 0} \tau^{-1} [\nabla (\gb \circ \Fc_\tau)(\xb)- \nabla \gb(\xb)].
\end{equation}
Here, the gradient forms a {column} vector for each scalar observation: $\nabla \gb$ has $L$ columns, each one a gradient. In practice, we use finite $\tau$ matching the lag time between sample points (equation~\ref{eq:curvature}).

      Compared with a standard Gaussian kernel, equation~\eqref{eq:kernel}
     comes from the change of variables $\xb \mapsto \tilde{\xb} = \Mb^{1/2}\xb$. Assuming $\Mb$ is invertible, let
     $\tilde{\gb}(\tilde{\xb}) = \gb(\xb)$ and $\tilde{\Fc}_\tau(\tilde{\xb}) = \tilde{\yb}$, where $\yb = {\Fc}_\tau(\xb)$, and define 
    \begin{equation}\label{eq:tildeJ}
\tilde{\Jb}(\xb) = \lim_{\tau \to 0} \tau^{-1} [\nabla (\tilde{\gb
} \circ \tilde{\Fc}_\tau)(\xb)- \nabla \tilde{\gb}(\xb)].
\end{equation}

 This change of variables, {\em i.e.,} $\xb \mapsto \tilde{\xb}$, $\gb \mapsto \tilde{\gb}$, and $\Fc_\tau \mapsto \tilde{\Fc}_\tau$, assumes that the input/output pairs are transformed by $\Mb^{1/2}$, but that the observations do not change. 
  The matrices $\Jb$ and $\tilde{\Jb}$ measure   changes in space and time of the original and transformed variables, respectively. Theorem~\ref{thm:main} below shows that these changes are isotropic in the transformed variables (proof is in Appendix~\ref{appendix:Mahalanobis}). 

\begin{theorem}\label{thm:main}
If $\Mb$ defined by~\eqref{eq:grad-outerproduct}-\eqref{eq:Jexact} is invertible, then
\begin{equation*}
   \frac{1}{N}\sum_{n=1}^N \left|\ub^\ast \tilde{\Jb}(\tilde{\xb}_n)\right|^2 \equiv 1, \quad \textup{for all unit $\ub$}.
   \end{equation*}
   If in addition $\frac{d}{dt}\xb(t)^\ast = \xb(t)^\ast \Ab$ and $\gb(\xb) = \xb^\ast$, then $\Mb = \Ab \Ab^\ast$,  $\Jb = \Ab$, and $\tilde{\Jb} = \Mb^{-1/2} \Ab$ is an orthogonal matrix.
    \end{theorem}

Intuitively, $\Jb$ and $\tilde{\Jb}$ measure changes in space and time of $\xb$ and $\tilde{\xb}$ respectively. Theorem~\ref{thm:main} states that the change of variables, using $\xb \mapsto \tilde{\xb}$, results in isotropic changes in space and time. This property is summarized in Figure~\ref{fig:isotropic_concept}.

\begin{figure}[!ht]
    \centering
    \includegraphics[width = \linewidth]{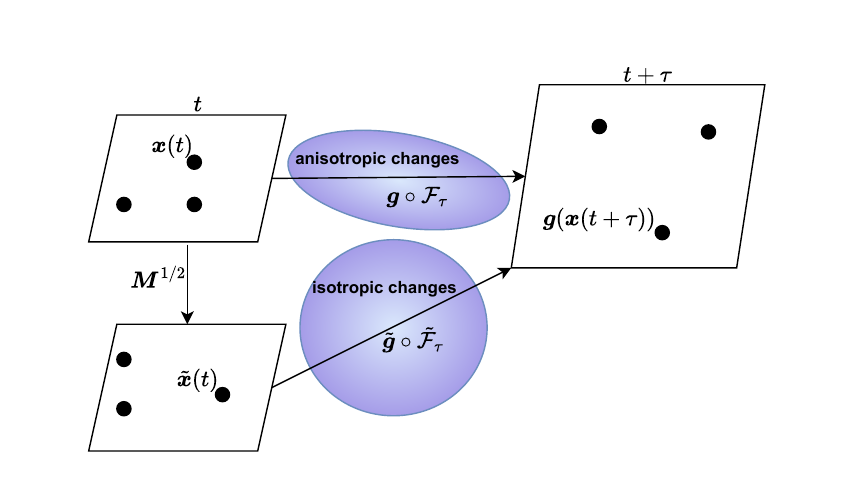}
    \caption{Illustration of Theorem~\ref{thm:main}: Changing variables from $\xb(t)$ to $\tilde{\xb}(t)$ (by multiplying by $\Mb^{1/2}$) enforces isotropic changes in observation function $\gb$ over a time interval $\tau$.}
    \label{fig:isotropic_concept}
\end{figure}

In practice, we compute $\Jb$ using 
\begin{equation}\label{eq:curvature}
        \Jb(\xb) \approx \tau^{-1} \sum_{m \in {\mathcal S}} (\mu_m-1) \nabla \phi_m(\xb) \vb_m^\ast;
     \end{equation}
     this is an estimate of~\eqref{eq:Jexact} based on equation~\eqref{eq:estimate-observable}, finite lag $\tau$, and mode set ${\mathcal S}$.
     The set ${\mathcal S}$ of modes is chosen according to cutoffs intended to enforce stability and eliminate noise effects (see Section~\ref{sec:tuning}), and can be found using cross-validation. 
     
     In practice, $\Mb$ may be noninvertible or approximately noninvertible, because it maps away variables that are irrelevant to the observation $\gb$, along with noise identified by mode selection. From this perspective, the $\Mb^{1/2}$ transformation removes unnecessary variables, and enforces isotropy of the remaining variables in the sense of Theorem~\ref{thm:main}.

   
\subsection{Delay embeddings}
     
 The other key ingredient in FKMD involves delay embeddings. According to Taken's theorem\cite{takens2006detecting}, certain partially observed dynamical systems can be reconstructed via delay embeddings. Delay embedding of features was investigated in Hankel DMD~\cite{arbabi2017ergodic}, and reviewed in greater generality in~\cite{kamb2020time}. Our setup is different in that the features are themselves functions of delay embedded sample points. This leads to smaller matrices in~\eqref{eq:Psix}-\eqref{eq:Psiy} while improving distance measurements. 

Here, we define samples 
as delay embeddings of length $\ell$, 
\begin{align}\begin{split}\label{eq:time-embedding}
         \xb_{n+1} &= \begin{bmatrix} \xb(n\tau) & \ldots &  \xb((n+\ell-1)\tau)\end{bmatrix} \\
         \yb_{n+1} &= \begin{bmatrix} \xb((n+1)\tau) & \ldots &  \xb((n+\ell)\tau)\end{bmatrix},
         \end{split}
     \end{align}
    where $\xb(0)$ is some initial state. 
     The evolution map $\Fc_\tau$ extends 
     to such states in a natural way, and 
     the associated Koopman operator 
     is then defined on functions of time embedded states. From here on, we abuse notation by  
     writing $\xb$ or $\xb'$ for a delay embedding (or sample) of the form~\eqref{eq:time-embedding}.
     
     Note that the features~\eqref{eq:features} also have a delay-embedded structure, as in Hankel DMD~\cite{arbabi2017ergodic} 
     and kernel EDMD~\cite{kevrekidis2016kernel}. We find that an additional layer of embedding, applied to 
     the samples themselves, 
     improves distance measurements. This helps mitigate the curse of dimensionality and noise effects.
     

\subsection{The FKMD Algorithm}

FKMD combines ordinary KMD with a particular choice of features (equations~\eqref{eq:features} or~\eqref{eq:features-RFF}) and delay-embedded samples (equation~\eqref{eq:time-embedding}), along with iterations that improve the features based on matrix $\Mb$ (equations~\eqref{eq:grad-outerproduct},\eqref{eq:curvature}).  
The matrix $\Mb$ featurizes by enforcing isotropic changes in space and time.
We summarize in Algorithm~\ref{alg:FKMD} below, which we call Featurized Koopman Mode Decomposition (FKMD).

 \begin{algorithm}[FKMD]\label{alg:FKMD}
 Choose parameters $\ell$, $h>0$, and $R$, mode selection rules ${\mathcal S}$, and set $\Mb = \Ib$. Then, iterate steps $1$-$6$ below until convergence:
 \begin{itemize}
 \item[1.] Compute bandwith $\sigma$ from  $\Mb^{1/2}\xb_1,\ldots, \Mb^{1/2}\xb_N$; set
 \begin{equation*}
     \Mb \gets \Mb/(h\sigma)^2.
 \end{equation*}
        \item[2.]  Construct  $\Psib_{\xb}$ and $\Psib_{\yb}$ defined in~\eqref{eq:Psix}-\eqref{eq:Psiy}.
        
        \item[3.] Solve for $\Kb$ and $\Bb$ in~\eqref{eq:Koopman-solve}-\eqref{eq:B-solve}.
            \item[4.] Compute eigenvalues $\mu_m$ and eigenvectors $\xib_m$,$\wb_m$ of $\Kb$ as in~\eqref{eq:Koopman-decomposition}.
            \item[5.] Compute Koopman eigenfunction and modes using
            \begin{align*}
                 \phi_m(\xb) = \psib(\xb)\xib_m, \quad \vb_m^\ast = \wb_m^\ast \Bb.
            \end{align*}
        
            \item[6.]  Update $\Mb$ using~\eqref{eq:grad-outerproduct},~\eqref{eq:curvature};  replace $\Mb$ by its real part.
 \end{itemize}
 \end{algorithm}

Some comments are in order: 
\begin{itemize}
    \item At any iteration of FKMD, 
    we can make predictions using~\eqref{eq:estimate-observable}. Empirically, convergence of $\Mb$ and the predictions occurs after $<10$ iterations, and the predictions  improve monotonically with iteration. 
    
    \item  Replacing $\Mb$ by its real part in FKMD makes $\Mb$ into a symmetric positive semidefinite matrix, and it does not change the Mahalanobis distance $|\Mb^{1/2}(\xb-\xb')|$ or the kernel
$k_{\Mb}(\xb,\xb')$. 
 \item FKMD
needs only a few user-chosen parameters. 
This may be an advantage 
 over neural networks, which can have a much larger set of 
hyperparameters\cite{yamashita2018convolutional}.
\end{itemize}

 \subsection{Tuning} \label{sec:tuning}

FKMD requires the following user-chosen parameters: the number of iterations; $R$, the number of features; $\ell$, the delay embedding length; $h$, a scalar; $\sigma$, the bandwidth; and ${\mathcal S}$, the mode set for defining $\Mb$. We discuss these choices below.

We have found empirically that FKMD converges in a small number of iterations (at most $10$, and usually fewer than $5$). Of course, assuming sufficient data, results improve as the number, $R$, of features grows. Increasing the embedding length, $\ell$, can give better results in systems that are partly observed, assuming that $R$ is concomitantly increased. Regarding $h$, we found that 
the value $h = 1$ works well as a default, and use it in the experiments in Sections~\ref{sec:ODE} and~\ref{sec:Lorenz}. In Section~\ref{sec:cells} we refine this default value slightly using cross-validation.
 
In kernel learning, 
bandwidth is often based on the pairwise distances between samples~\cite{silverman2018density,flaxman2016bayesian}. In FKMD, it is more appropriate to use the transformed samples, $\Mb^{1/2}\xb_1,\ldots,\Mb^{1/2}\xb_N$. 
In the experiments in Sections~\ref{sec:ODE} and~\ref{sec:Lorenz}, $\sigma^2$ is the trace of the covariance matrix of $\Mb^{1/2}\xb_1,\ldots,\Mb^{1/2}\xb_N$. This bandwidth, based on the standard deviation of the samples, is similar to Silverman's rule of thumb~\cite{silverman2018density}. In the cell-signaling experiment of Section~\ref{sec:cells}, $\sigma$ is the (vector) standard deviations of the pairwise absolute differences between the transformed samples~\cite{navarra2021estimation}. These pairwise differences can be subsampled if needed. This reflects the need for larger bandwidth when there is a lot of variation in the pairwise differences.

The mode set ${\mathcal S}$ could be chosen from physical considerations (like desired timescales) or cross-validation (enforcing FKMD accuracy). 
We used cross-validation to 
choose ${\mathcal S}$ in all experiments. For the experiments in Sections~\ref{sec:ODE} and~\ref{sec:Lorenz}, we choose the top few modes based on magnitude; for the experiments in Section~\ref{sec:cells}, we also
use physical considerations to define cutoffs.

There are a couple of other (optional) regularizations that could be applied to FKMD. For example, ``unphysical'' modes could be removed from equation~\eqref{eq:estimate-observable}. Indeed, in the cell signaling experiments in Section~\ref{sec:cells}, we removed modes with $\textup{Re}(\lambda_m) \gg 0$ or {\em i.e.}, $|\textup{Im}(\lambda_m)| \gg 0 $; this corresponds to eliminating diverging and fast oscillatory modes.

We have also found that it may be useful to add a small ridge $\Mb \leftarrow \Mb + \delta \Ib$ to $\Mb$ after Step 6, where $\delta>0$ is a small parameter. This can prevent degradation of results with iteration.
In practice, with large sample sizes, subsampling may be used to estimate $\Mb$ via~\eqref{eq:grad-outerproduct}. While we define the initial $\Mb$ in FKMD as a scalar multiple of the identity matrix, it 
could also be based on 
initial knowledge of the system.

\subsection{Scaling up to larger sample size}\label{sec:scaling_up}

 On top of the difficulty of choosing good 
 features, kernel methods 
 have been plagued 
 by the computational 
 complexity of large linear solves~\cite{tropp2023randomized}, typically 
 limiting sample size to $N \le 10^5$. Using random Fourier features~\cite{rahimi2007random,yang2012nystrom,kammonen2020adaptive,nuske2023efficient,degennaro2019scalable}, however, we can scale FKMD to
 large sample size $N$:
\begin{equation}\label{eq:features-RFF}
    \psi_m^{RFF}(\xb) = \exp(i \omegab_m^\ast \Mb^{1/2}\xb),  \qquad m=1,\ldots,R.
\end{equation}
Here, $\omegab_m$ are iid Gaussians with mean $\Ob$ and covariance $\Ib$,
\begin{equation}\label{eq:wavenumbers}
\omegab_m \sim {\mathcal N}(\Ob,\Ib),
\end{equation}
and $\Mb$ is symmetric positive semidefinite. 

We expect good results with $R \ll N$, and this leads to much faster linear algebra routines. 
The features~\eqref{eq:features-RFF}-\eqref{eq:wavenumbers} 
essentially target the same linear system~\eqref{eq:Koopman-solve} as the kernel features, but they do it more efficiently by sampling; proof is in Appendix~\ref{appendix:RFF}.

\section{Experiments}

\subsection{Experimental setup}

{The experiments below are organized as follows. We begin with a high-dimensional system of linear differential equations that has effectively low-dimensional dynamics. In this example, we do not delay embed the samples, so that we can focus on the effect of $\Mb$. We illustrate a direct connection between $\Mb$ and the matrix driving the linear system (Theorem~\ref{thm:main}), and show that this leads to improved predictions.}

{Our second example is a high-dimensional Lorenz attractor with added noise, where a small percent of the system is observed. In this example, we needed substantially long delay embeddings to reliably forecast. We show that despite high dimensionality, presence of noise, and complex dynamics, we get accurate predictions when using sufficiently long delay embeddings and mapping them by $\Mb^{1/2}$.} 

{Our final example uses real-world cell-signaling data. There, we track single-cells through time, observe a scalar function of the cells (that is related to cellular behaviors like proliferation rate), and find that FKMD yields predictive capability 
hours into the future.}

{FKMD generalizes standard Koopman mode decomposition methods by adding two ingredients: a delay embedding of the sample points, and a learned mapping $\Mb^{1/2}$. Without these add-ons, FKMD is  EDMD~\cite{williams2015data} with Hankel data matrices~\cite{arbabi2017ergodic} formed from Gaussian kernel features~\cite{williams2014kernel} (or approximated by random Fourier features). 
We refer to this as ``kernel EDMD;'' this method 
has been used {\em e.g} in~\cite{kevrekidis2016kernel,klus2020kernel,philipp2023error}.}

{There are a host of KMD methods~\cite{colbrook2023multiverse}, and it is outside the scope of this paper to compare our method against all of them. However, viewing FKMD as an extension of kernel EDMD leads to natural comparisons, in which either or both of the add-ons -- delay embedding of the samples, and mapping samples by $\Mb^{1/2}$ -- are not included. }

{Without delay embedding of the samples, we were unable to forecast reliably in the Lorenz and cell signaling experiments. 
We therefore only compare with basic kernel EDMD in our simplest example system, the system of linear differential equations.
We do, however, illustrate the failure to forecast on the Lorenz system in Figure~\ref{fig:KMD}.}

{This leads us to a comparison with kernel EDMD with delay embedded samples, or equivalently, FKMD with $\Mb = \Ib$.  
This comparison is automatically included in our results below, as it corresponds to the first iteration of FKMD. From this point of view, FKMD further refines kernel EDMD 
via learning of $\Mb$.}

\subsection{High-dimensional system of differential equations}
\label{sec:ODE}

{In this section, we demonstrate FKMD on a system of linear differential equations with oscillating 
and decaying components:
\begin{equation}\label{eq:ODEmodel}
    \frac{d\xb(t)^\ast}{dt} = \xb(t)^\ast \Ab.
\end{equation}
The (real) matrix $\Ab$, shown in Figure~\ref{fig:oscillator}(b), has eigenvalues  $\pm \sqrt{5}i$, $\pm \sqrt{2}i$, and $-3$, along with $25$ eigenvalues $\approx -10^{-2}$. This corresponds to $2$ oscillating modes, one fast decay mode, and many slow decay modes. We will take observations of the entire system, so that  delay embedding of the sample points is not needed. This allows us to  focus on the effect of the $\Mb$ matrix for forecasting.}

{Sample inputs, $\xb_n$, are independent draws from a standard normal distribution. Sample outputs, $\yb_n$, are obtained from $\xb_n$ by integrating~\eqref{eq:ODEmodel} up to time $\tau = 10^{-2}$ using $4$th order Runge-Kutta~\cite{butcher1996history} with integrator time step $10^{-3}$. Data is divided into a training and testing set. We use FKMD with random Fourier features~\eqref{eq:features-RFF}-~\eqref{eq:wavenumbers} to forecast on the testing set, given its initial state; the forecast looks ahead $100$ discrete time steps of size $\tau$.}

\begin{figure}[!ht]
  \centering
  \subfloat[][]{\includegraphics[width=.43\textwidth]{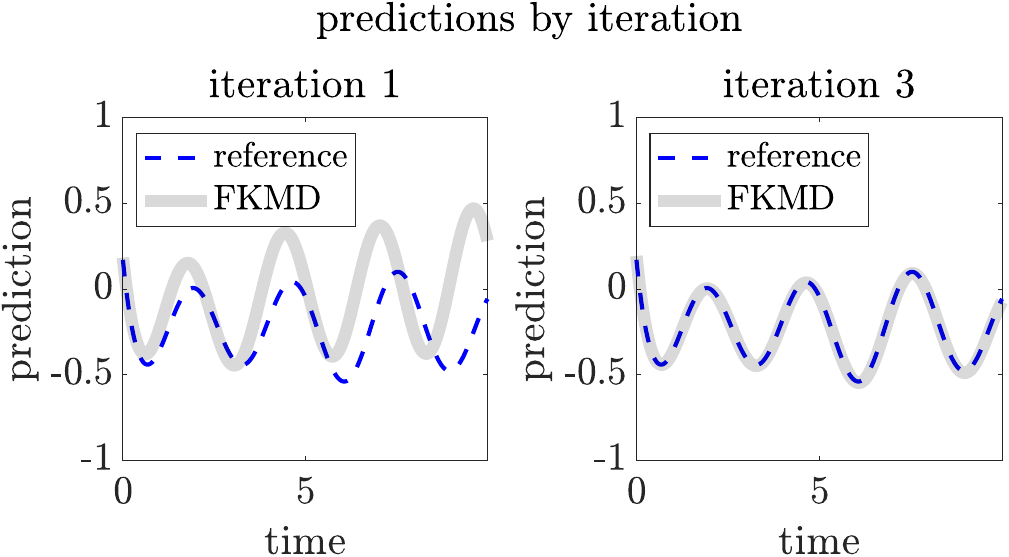}} \\ 
  \subfloat[][]{\includegraphics[width=.48\textwidth]{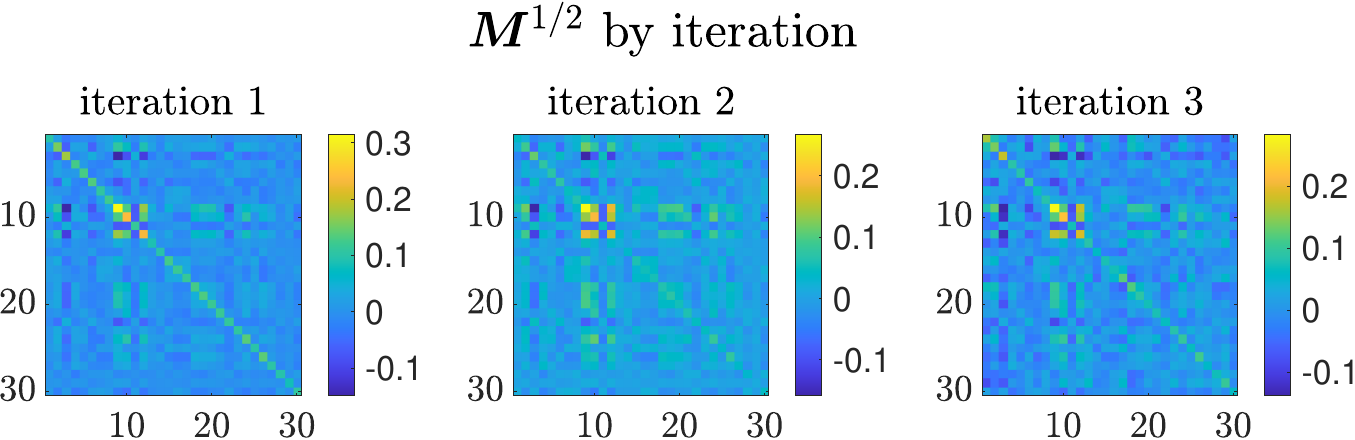}}\\
  \subfloat[][]{\includegraphics[width=.22\textwidth]{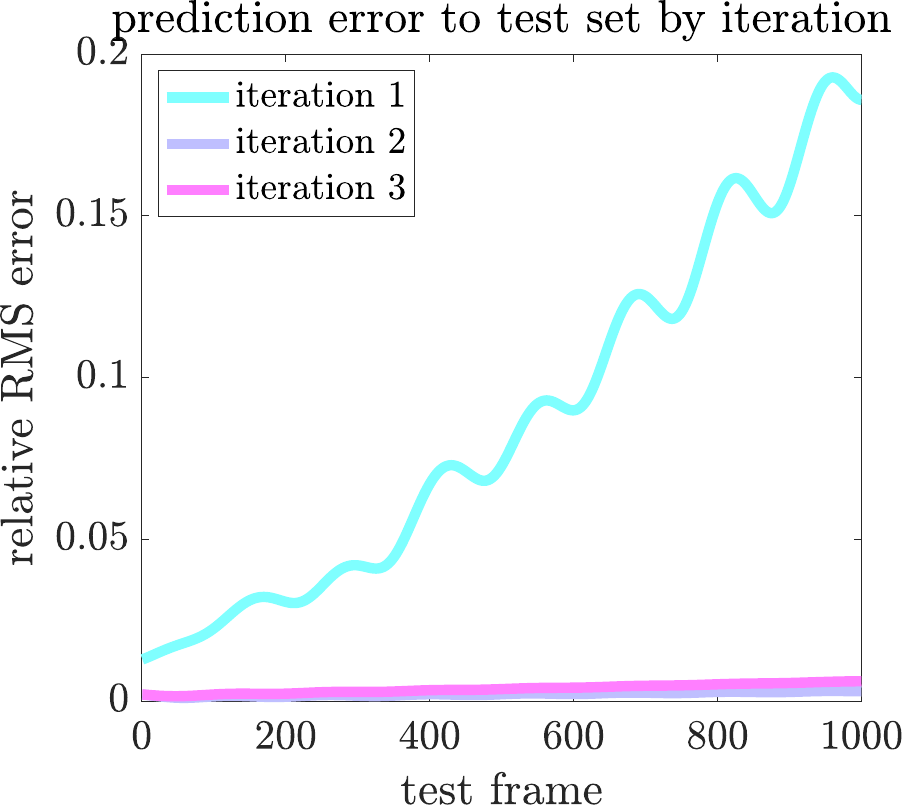}}\quad
  \subfloat[][]{\includegraphics[width=.23\textwidth]{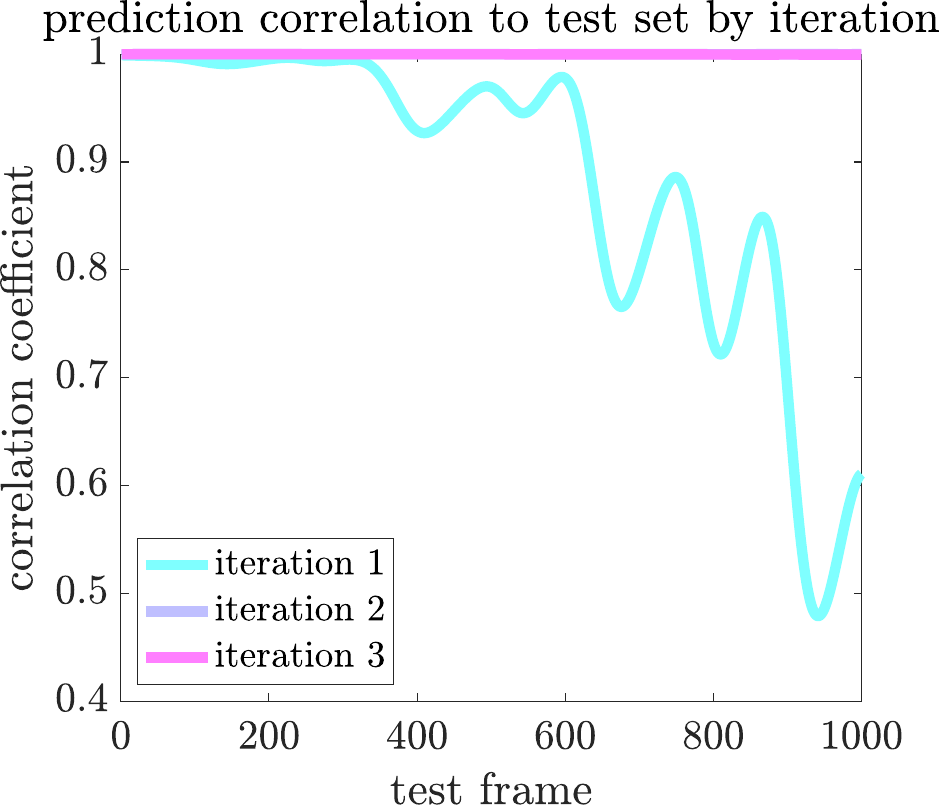}} 
      \caption{{Results from the linear system of differential equations in  Section~\ref{sec:ODE}: (a) Predictions of the first coordinate of $\xb(t)$ from the first and third iteration of FKMD. Plots are similar for other coordinates. (b) $\Mb$ matrix at each iteration.  (c)-(d) Predictions errors and correlations on a test set. FKMD converges by the second iteration, producing very accurate results. (Curves for the $2$nd and $3$rd iterations mostly overlap.) As explained above, the first iteration is kernel EDMD~\cite{kevrekidis2016kernel}. }}
  \label{fig:oscillator}
\end{figure}

{For the FKMD parameters, we use $N = 10^5$ training sample points, $R = 10^3$ features, delay embedding of length $\ell = 1$ ({\em i.e.}, the sample points are not delay embedded), and $h = 1$. 
We observe the full system, $\gb(\xb) = \xb^\ast$. We use the top $400$ modes and a random subsample of $5000$ points to define $\Mb$; results are not very sensitive to these choices so long as the parameters are large enough.}

{Figure~\ref{fig:oscillator} shows the results. FKMD accurately estimates $\Mb$ after the first iteration and converges by the $2$nd iteration, shown in Figure~\ref{fig:oscillator}(a). Improved forecasting with iteration -- see Figure~\ref{fig:oscillator} (a) -- can be traced to how the $\Mb$ matrix makes the system look isotropic, as we describe in Theorem~\ref{thm:main} and illustrate in Figure~\ref{fig:oscillator_match}. We measure relative root mean squared (RMS) error and correlations of the 
predictions in Figure~\ref{fig:oscillator}(c)-(d). (The relative RMS error is the root mean squared error divided by the average norm of the data.) The error and correlations markedly improve after the first iteration, and remain steady thereafter.}

\begin{figure}[!ht]
  \centering
{\includegraphics[width=.48\textwidth]{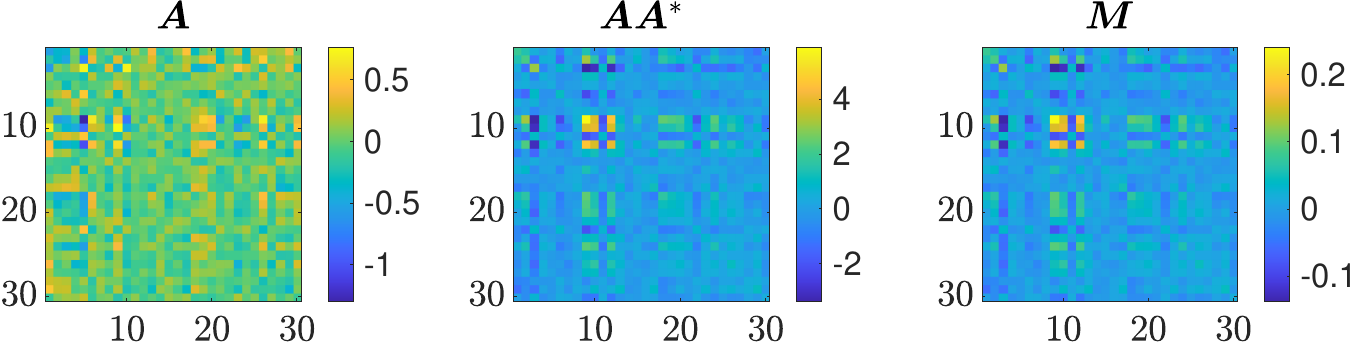}}
      \caption{{Matrices $\Ab$, $\Ab \Ab^\ast$, and $\Mb$ from the experiment in Section~\ref{sec:ODE}. After FKMD convergence,  $\Mb$ matches $\Ab \Ab^\ast$ up to a scalar factor; see Theorem~\ref{thm:main}. }}
  \label{fig:oscillator_match}
\end{figure}

\subsection{Lorenz attractor}\label{sec:Lorenz}

Next, we apply FKMD to data from the Lorenz-96 model~\cite{lorenz1996predictability}, a high-dimensional ODE exhibiting chaotic behavior. This model (and
its 3-dimensional predecessor~\cite{lorenz1963deterministic}) are
 often used to interpret atmospheric convection and to test tools in climate analysis~\cite{hu2021particle}. The model we use is 
\begin{equation}\label{eq:lorenz96}
    \frac{d\theta_j}{dt} = (\theta_{j+1}-\theta_{j-2})\theta_{j-1}-\theta_j + F,
\end{equation}
with $j = 1,\ldots,40$ periodic coordinates ($j \equiv j\textup{ mod }40$). We set $F = 8$, and generate a long trajectory of data by integrating~\eqref{eq:lorenz96} using $4$th order Runge-Kutta~\cite{butcher1996history} with integrator time step $10^{-2}$ and initial condition
$$\theta_j(0) = \begin{cases}F + 1, & j \textup{ mod }5 =0 \\ F, &\textup{else}\end{cases}.$$

To illustrate the power of 
FKMD, we observe just $2.5\%$ of the system, namely the first coordinate $\theta_1$, and we add nuisance variables.
Specifically, we use the delay embedding~\eqref{eq:time-embedding} with $\tau = 0.05$ and 
\begin{equation}\label{eq:lorenz-x}
    \xb(n\tau) = \begin{bmatrix} \theta_1(n\tau) & \textup{noise}(n\tau)\end{bmatrix},
\end{equation}
where $\textup{noise}(n\tau)$ for $n=0,1,2,\ldots$ are independent Gaussian random variables with mean $0$ and standard devation equal to $3$ (commensurate with that of $\theta_1$). {We divide the time series data into a training and testing set, and 
use FKMD with random Fourier features to forecast on the testing set, given its initial state. The testing set consists of the last $100$ time steps of the data.}

\begin{figure}[!ht]
  \centering
  \subfloat[][]{\includegraphics[width=.43\textwidth]{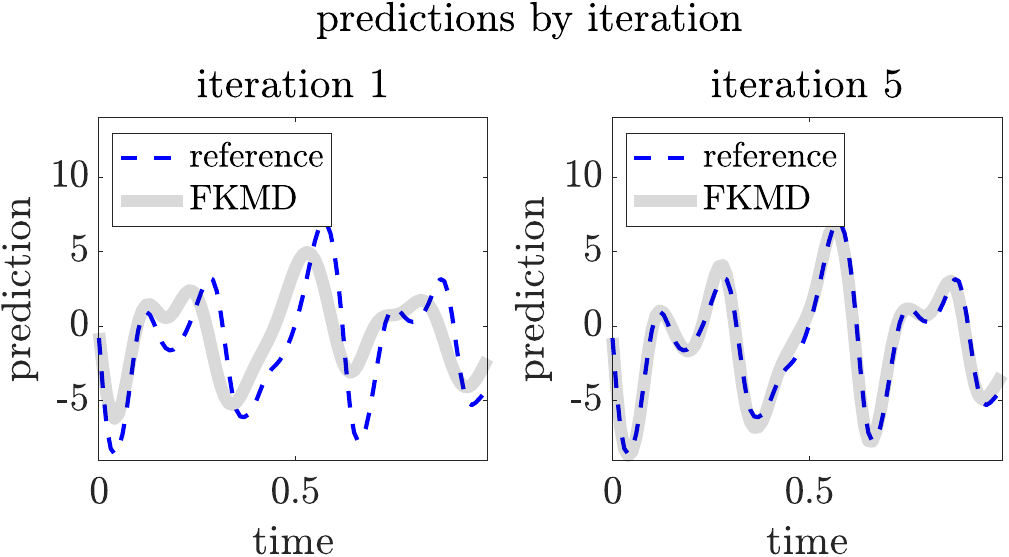}} \\
  \subfloat[][]{\includegraphics[width=.48\textwidth]{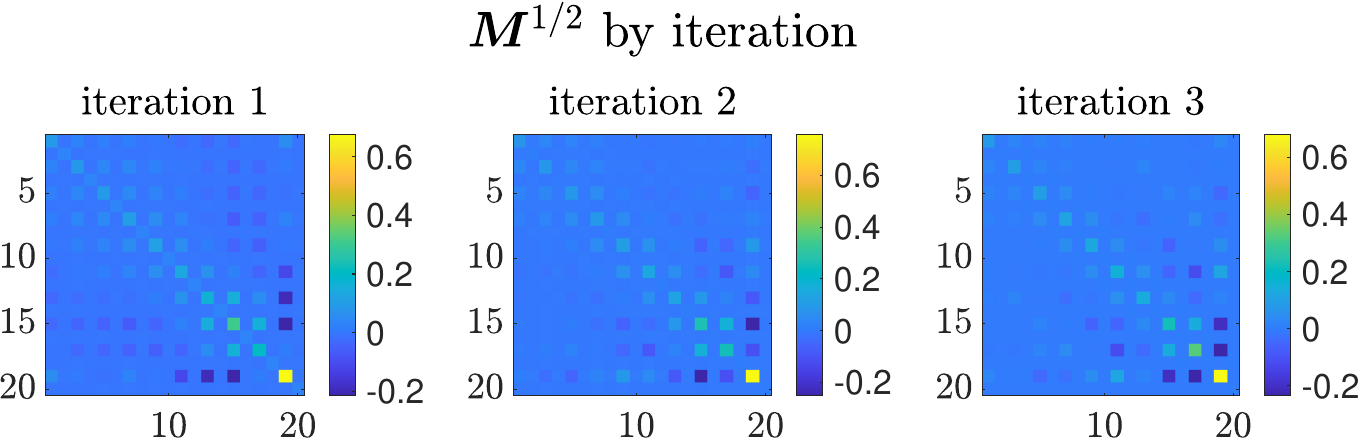}}\\
  \subfloat[][]{\includegraphics[width=.22\textwidth]{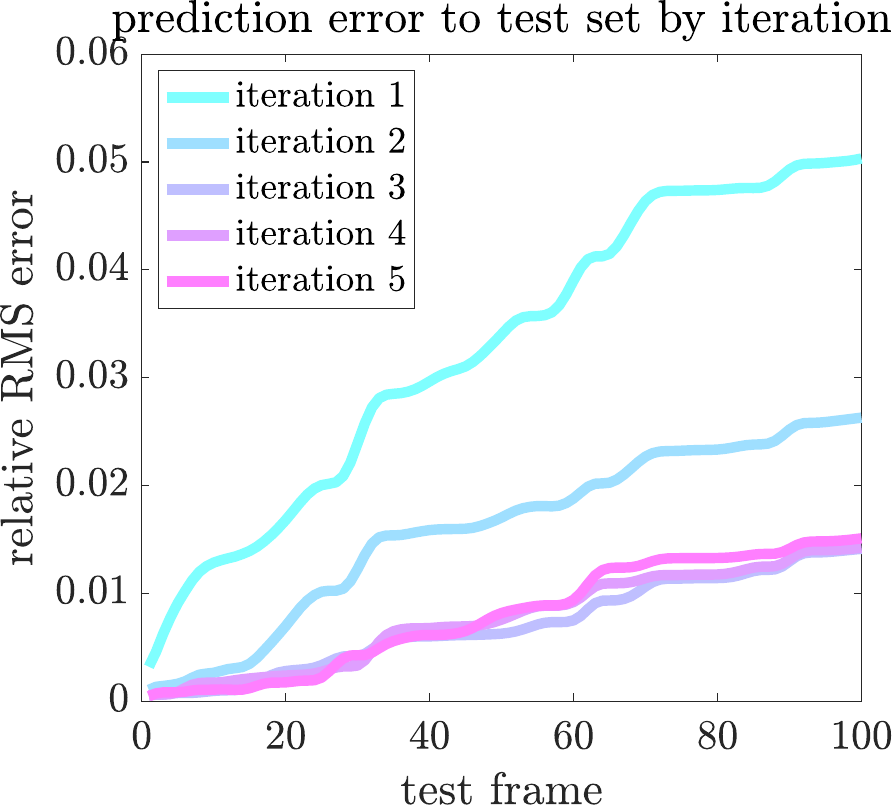}}\quad
  \subfloat[][]{\includegraphics[width=.235\textwidth]{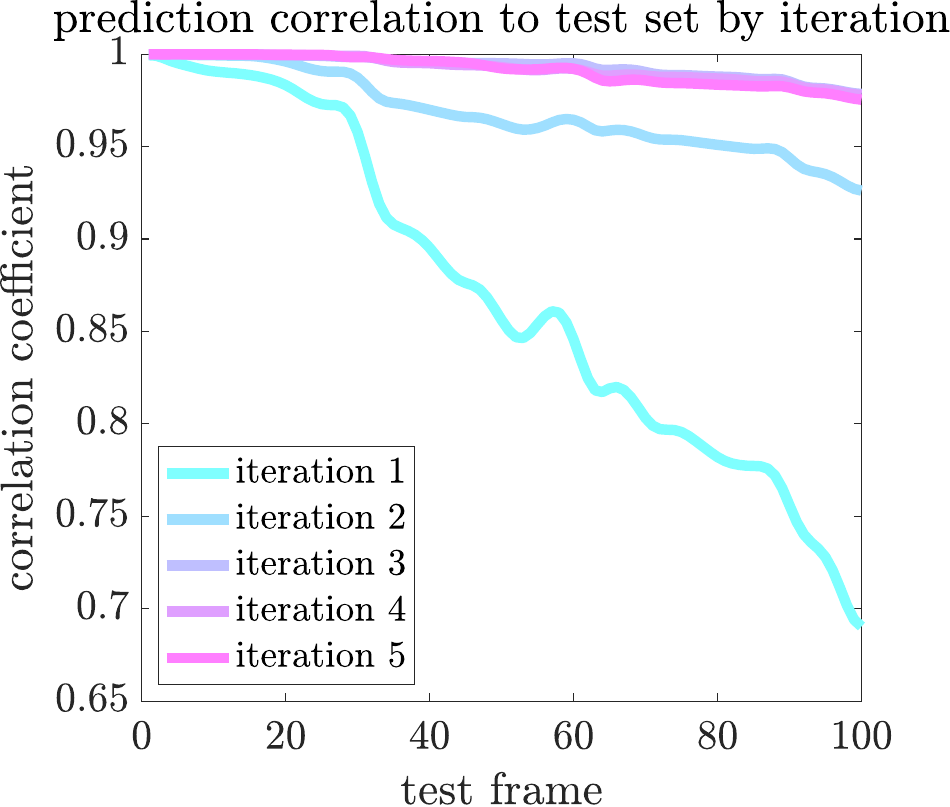}}
  \caption{{Results from the Lorenz system in Section~\ref{sec:Lorenz}: (a) Predictions of $\theta_1(t)$ from the first and fifth iteration of FKMD. (b) $\Mb$ matrix at iterations $1$, $2$ and $3$. Plotted is the bottom $20\times 20$ submatrix of $\Mb$, corresponding to the embedded coordinates closest to the ``current'' time. Mapping by $\Mb^{1/2}$ sends the noise variables to $0$; this is apparent from the checkerboard pattern of $0$s in the matrix. (c)-(d) Prediction errors and correlations compared to test set. (Curves for iterations $3$-$5$ are mostly overlapping.) The matrix $\Mb$ and the FKMD predictions visibly converge after $3$ iterations.}}
  \label{fig:lorenz}
\end{figure}

For the FKMD parameters, we use $N = 10^6$ sample points, $R = 5000$ features, a delay embedding of length $\ell = 100$, and $h=1$. The observation $\gb(\xb)$ is a $1 \times 200$ vector associated with delay embeddings of~\eqref{eq:lorenz-x} as defined in~\eqref{eq:time-embedding}. 
We use the top $20$ 
modes and a random subsample of $5000$ points to define $\Mb$;
results are not overly sensitive to these choices, but some mode cutoff is necessary to eliminate the effect of the nuisance coordinates.

Results are plotted in Figure~\ref{fig:lorenz}. 
Figure~\ref{fig:lorenz}(a) shows inference 
using equation~\eqref{eq:estimate-observable}. FKMD provides a very close match to the reference after $3$ iterations. Figure~\ref{fig:lorenz}(b) shows the change of variables matrix. {The $\Mb^{1/2}$ mapping eliminates the nuisance 
coordinates -- leading to a checkerboard pattern in Figure~\ref{fig:lorenz}(b) -- while preserving 
the structure of the underlying signal. For better visibility, only the bottom $20\times 20$ submatrix of $\Mb^{1/2}$ is shown. This portion of the matrix corresponds to the embedding coordinates closest to the ``current'' time.
Figure~\ref{fig:lorenz}(c) shows relative RMS error and correlations against a test set. 
We compute these quantities with respect to the noise-free trajectory. FKMD provides an 
excellent fit to the data by iteration $3$.}

\begin{figure}[!ht]
  \centering
{\includegraphics[width=.21\textwidth]{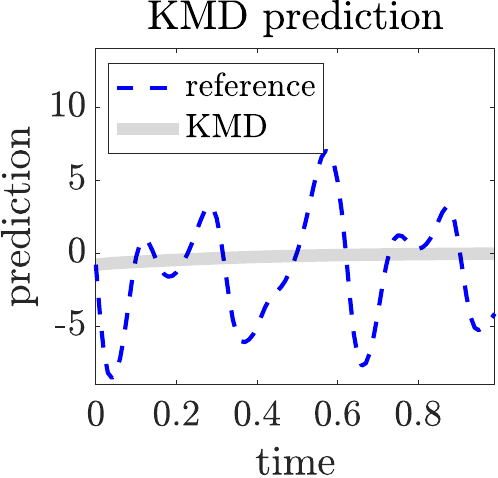}}
  \caption{{Kernel EDMD~\cite{kevrekidis2016kernel} is unable to make predictions that correlate with the test set for the Lorenz example of Section~\ref{sec:Lorenz}.}}
  \label{fig:KMD}
\end{figure}

{Results from kernel EDMD~\cite{kevrekidis2016kernel} are shown in Figure~\ref{fig:KMD}. There, 
we use the same parameters from 
FKMD ($N = 10^6$ samples, $R = 5000$ features, $h =1$) but 
do not delay embed the sample points, and do not use matrix $\Mb$. Kernel EDMD is not able to accurately forecast on the Lorenz data.}

\subsection{Cell signaling dynamics}\label{sec:cells}

In a real-world data-driven setting, complex and potentially noisy temporal outputs derived from measurement may not obey a simple underlying ODE or live on a low-dimensional dynamical attractor. Information contained by internal signaling pathways within living cells is one such example, being complex and subject to noisy temporal outputs arising from properties of the system itself and experimental sources.

With this in mind, we next apply FKMD to dynamic signaling activity in cancer cells to assess its performance. 
We show that our methods enable the forward prediction of single-cell signaling activity from past knowledge in a system where signaling is highly variable from cell to cell and over time\cite{davies2020systems}. The extracellular signal‑regulated kinases (ERK) signaling pathway is critical for the perception of cues outside of cells and for translation of these cues into cellular behaviors such as changes in cell shape, proliferation rate, and phenotype\cite{copperman2023morphodynamical}. Dynamic ERK activity is monitored via the nuclear or cytoplasmic localization of the fluorescent reporter (Figure~\ref{fig:cells}A). We track single-cells through time in the live-cell imaging yielding single-cell ERK activity time series (Figure~\ref{fig:cells}C). The first 72 hours of single-cell trajectories serve as the training set to estimate the Koopman operator, and we withhold the final 18 hours of the single-cell trajectories to test the predictive capability of FKMD. The raw ERK activity trajectories on their own yield no predictive capability via standard Kernel DMD methods, but our iterative procedure to extract the matrix $\Mb$ leads to a coordinate rescaling which couples signaling activity across delay times (Figure~\ref{fig:cells}B,D) and enables a forward prediction of ERK activity across the testing window (Figure~\ref{fig:cells}D). {Many higher-frequency changes in ERK activity appear stochastic and are not forecasted, but the FKMD method reveals a slow ($\sim 8$ hours) and predictable component to the ERK signaling activity (Figure~\ref{fig:cells}C).}

We use $N = 5202$ samples and kernel features with $R = N$, and we choose bandwidth $h = 1.05$ and a delay embedding of length $\ell = 49$. The function $\gb(\xb)$ is a $1 \times 49$ delay embedding of the scalar ERK activity. For inference, we exclude modes  where $\textup{Re}(\lambda_m) >  0.15$ and $|\textup{Im}(\lambda_m)| >  \pi/3$. This amounts to excluding unstable modes and modes that oscillate quickly. We use the remaining modes to construct $\Mb$. Prediction quality is quantified by estimating the relative error and correlation between inferred and test set ERK activity trajectories.

\begin{figure}[h]
         \centering
\includegraphics[width=0.5\textwidth]{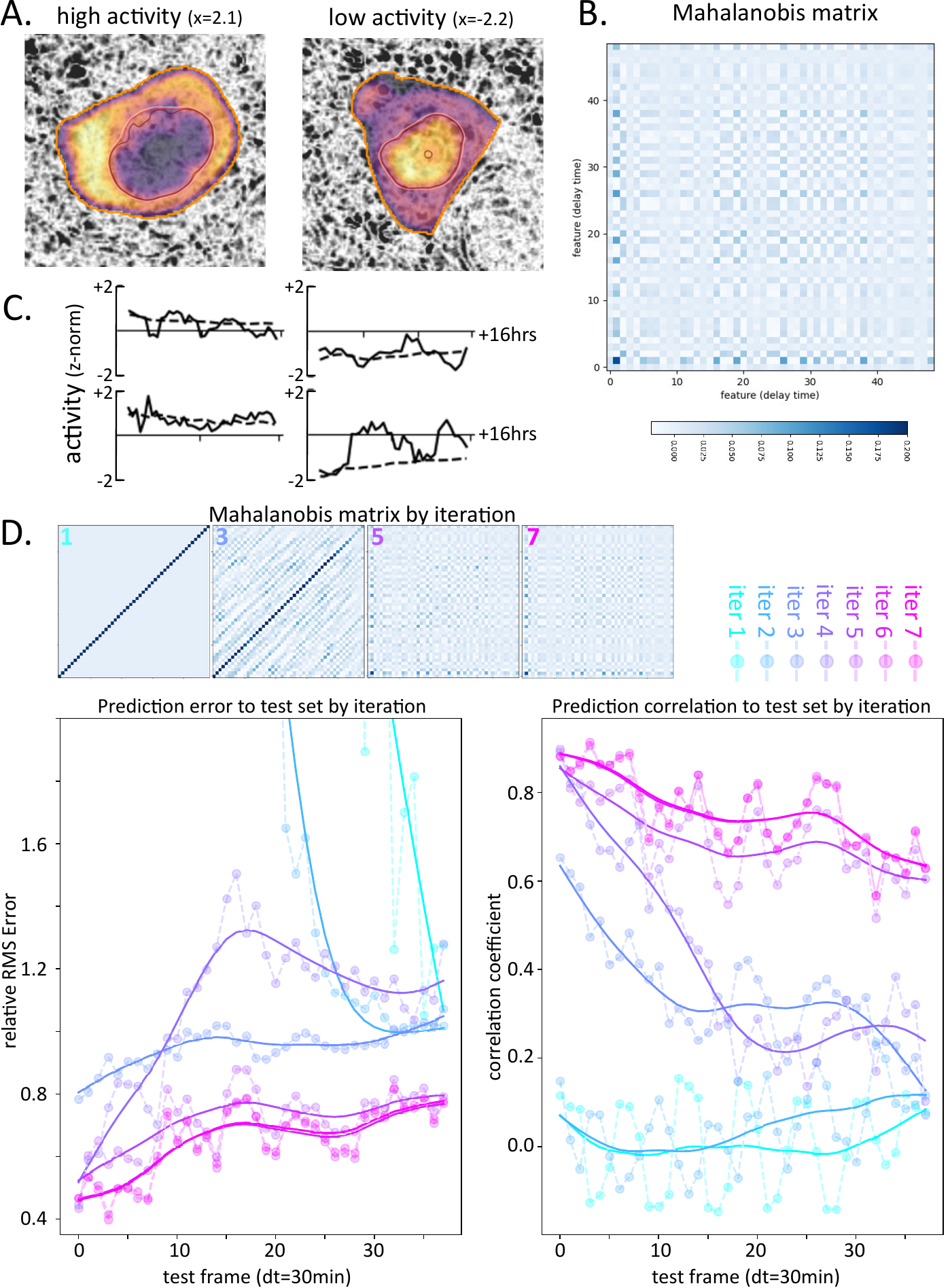}
\caption{FKMD-enabled cell signaling state prediction from Section~\ref{sec:cells}: (A) Fluorescent ERK reporter expressing breast cancer cell embedded in a mammary tissue organoid, showing representative high activity (left, cytoplasmic localized) and low activity (right, nucleus localized). (B) Malahanobis matrix at iteration 8, where test set correlation is maximized. (C) Representative single-cell ERK activity traces, measured test set (solid lines), and FKMD-predicted (dashed lines). (D) matrix $\Mb^{1/2}$ by FKMD iteration (top), and FKMD prediction performance from 0-18hrs quantified by relative 
root mean squared (RMS) error and correlation to test set (bottom left and right) by iteration number (cyan to magenta circles). solid lines are spline fits added as guides to the eye.}
        \label{fig:cells}
\end{figure}

\section{Discussion and future work}

This article introduces FKMD, 
a method we propose that can generate more accurate predictions than ordinary kernel KMD~\cite{kevrekidis2016kernel}. The method 
is based on delay embeddings 
and a Mahalanobis distance that helps mitigates the curse of dimensionality.
{Results in three experiments -- a  system of linear differential equations, a high dimensional Lorenz system,  and a cell signaling problem -- 
illustrate the promise of the method for forecasting in complex systems. 
In all these experiments, 
using $\Mb$ improved predictions compared to kernel EDMD (Figures~\ref{fig:oscillator},~\ref{fig:lorenz} and~\ref{fig:cells}). In the Lorenz and cell-signaling problems, using delay 
embedded samples was essential to obtain any reasonable forecast in kernel EDMD (Figure~\ref{fig:KMD}). 
We conclude that FKMD can 
be a substantial improvement over kernel EDMD when the underlying dynamics are effectively low-dimensional, or when the system can only be partially observed.}

Many theoretical and algorithmic questions remain. 
Empirically, we have found that a few iterations 
and modes can lead to good results, 
but more empirical testing is needed, 
and our theoretical understanding of 
these issues is lacking. For example, 
we cannot yet describe a simple 
set of conditions that 
guarantees good behavior of Algorithm~\ref{alg:FKMD}, like convergence to a fixed point. 

We would also like to explore alternative methods for scaling 
up FKMD to larger sample sizes. 
We use random Fourier features, but other possibilities come from modern advances in randomized numerical linear algebra, {\em e.g.} randomly pivoted Cholesky~\cite{tropp2023randomized,chen2022randomly}. Such methods promise spectral efficiency for solving symmetric positive definite linear systems. Assuming fast spectral decay of $\Psib_{\xb}$, these techniques could help our methods scale to even larger sample sizes.
 We will explore the application of these cutting-edge methods in future works.

\begin{acknowledgments}
D. Aristoff gratefully acknowledges support from the National Science Foundation via Award No. DMS 2111277. N. Mankovich acknowledges the support of Generalitat Valenciana and the Conselleria d’Innovació, Universitats, Ciència i Societat Digital, through the project ``AI4CS: Artificial Intelligence for complex systems: Brain, Earth, Climate, Society'' (CIPROM/2021/56). This project was supported in part by funding (2023-1713) from the Cancer Early Detection Advanced Research Center at Oregon Health \& Science University’s Knight Cancer Institute (Jeremy Copperman and Alexander Davies). Alexander Davies gratefully acknowledges support from NIH/ORIP award K01OD031811.

D. Aristoff and J. Copperman acknowledge enlightening discussions about the gradient outerproduct\cite{radhakrishnan2022feature} with G. Simpson and R.J. Webber.

\end{acknowledgments}
\appendix


\section{Derivation of Koopman eigendecomposition}\label{appendix:inference}

Here, we show how to arrive at the Koopman eigendecomposition~\eqref{eq:estimate-observable}. This has been shown already in~\cite{williams2015data}, but we provide a streamlined derivation here for convenience.

Recall that the matrix $\Kb$ is a finite-dimensional approximation to the Koopman operator. This approximation is obtained by 
applying a change of variables from 
sample space to feature space. The change of variables is given by the matrix $\Psib_{\xb}$. This leads to the following equation for inference: 
\begin{equation}\label{eq:inference}
    \begin{bmatrix}\Kc_\tau (\gb)(\xb_1) \\ \vdots \\ \Kc_\tau (\gb)(\xb_N)\end{bmatrix} \approx \Psib_{\xb} \Kb \Psib_{\xb}^{\dag} \begin{bmatrix}\gb(\xb_1) \\ \vdots \\ \gb(\xb_N) \end{bmatrix},
\end{equation}
where $\dag$ denotes the Moore-Penrose pseudoinverse. Similarly,
\begin{equation*}
    \Bb = \Psib_{\xb}^{\dag}\begin{bmatrix}\gb(\xb_1) \\ \vdots \\ \gb(\xb_N) \end{bmatrix}.
\end{equation*}
The 
eigendecomposition of $\Kb$ can be written as
\begin{equation}\label{eq:K-decomposition}
    \Kb = \Xib \Db \Wb^\ast,
\end{equation}
where $\Kb \Xib = \Xib \Db$ and $\Wb^\ast \Kb = \Db \Wb^\ast$, and we may assume that $\Wb^\ast \Xib = \Ib$. 
Here, $\Db$ is the diagonal matrix of Koopman eigenvalues, $\mu_m$; 
that is, $\Db = \exp(\tau \Lambdab)$ where $\Lambdab$ is the diagonal matrix of continuous time Koopman eigenvalues, $\lambda_m$.

Plugging~\eqref{eq:K-decomposition} into~\eqref{eq:inference}, 
\begin{equation}\label{eq:Ktau}
     \begin{bmatrix}\Kc_\tau (\gb)(\xb_1) \\ \vdots \\ \Kc_\tau (\gb)(\xb_N)\end{bmatrix} \approx \Psib_{\xb} \Xib \exp(\tau\Lambdab) \Wb^\ast \Bb.
\end{equation}
The definition of Koopman modes and Koopman eigenfunctions shows that the rows of $\Wb^\ast \Bb$ are the Koopman modes $\vb_m^\ast$, while the Koopman eigenfunctions are sampled by the columns of
\begin{equation}\label{eq:PsiXi}
    \Psib_{\xb}\Xib = \begin{bmatrix} \phi_1(\xb_1) & \ldots & \phi_R(\xb_1) \\ 
    \vdots & & \vdots \\
    \phi_1(\xb_N) & \ldots & \phi_R(\xb_N)\end{bmatrix}.
\end{equation}
Substituting~\eqref{eq:PsiXi} into~\eqref{eq:Ktau} and writing the matrix multiplication in terms of outer products yields equation~\eqref{eq:estimate-observable}, provided we substitute $\xb_n$ for $\xb$, using any sample point $\xb_n$.

\section{Choice of matrix}\label{appendix:Mahalanobis}

Here, we explain the reasoning behind the choice of $\Mb$ in more detail.
Recall that the matrix defines a change of variables, 
$\tilde{\xb} = \Mb^{1/2} \xb$, where the tilde notation indicates
the changed variables.

In this appendix, {we assume  that $\Mb$ is symmetric positive definite -- in particular, invertible -- and we assume appropriate smoothness so that all the calculations make sense.}

Write $\tilde{\xb}_n = \Mb^{1/2}\xb_n$, $\tilde{\yb}_n = \Mb^{1/2}\yb_n$, and  
\begin{align*}
    \tilde{\gb}(\xb) = \gb(\Mb^{-1/2}\xb), \quad \tilde{\Fc}_\tau(\xb) = \Mb^{1/2}\Fc_\tau(\Mb^{-1/2}\xb). 
\end{align*}
Observe that then $\tilde{\gb}(\tilde{\xb}) = \gb(\xb)$ and $\tilde{\Fc}_\tau(\tilde{\xb}_n) = \tilde{\yb}_n$. We summarize these notations and mappings in Figure~\ref{fig:KMD_tildes}.
\begin{figure}[t!]
    \centering
    \includegraphics[width = \linewidth]{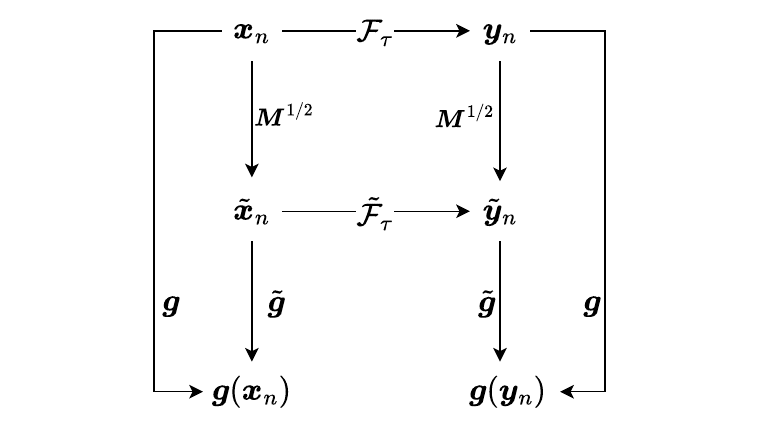}
    \caption{A figure summarizing the ``tilde'' notation for~\ref{propB}.}
    \label{fig:KMD_tildes}
\end{figure}
Define
\begin{align*}
    \Jb(\xb) &= \lim_{\tau \to 0} \tau^{-1}[\nabla (\gb \circ \Fc_\tau)(\xb) - \nabla \gb(\xb)], \\
     \tilde{\Jb}(\xb) &= \lim_{\tau \to 0}\tau^{-1}[\nabla (\tilde{\gb} \circ \tilde{\Fc}_\tau)(\xb) - \nabla \tilde{\gb}(\xb)].
\end{align*}

The next result,  Proposition~\ref{propB}, justifies our choice of $\Mb$. It shows that the 
changes in space and time 
in the transformed variables,
as measured by $\tilde{\Jb}$, are 
isotropic. This is a general result that is true regardless of kernel choice and other KMD hyperparameters.

\begin{proposition}\label{propB}
If $\Mb$ defined by~\eqref{eq:grad-outerproduct}-\eqref{eq:Jexact} is invertible, then
\begin{equation*}
   \frac{1}{N}\sum_{n=1}^N \left|\ub^\ast \tilde{\Jb}(\tilde{\xb}_n)\right|^2 \equiv 1, \quad \textup{for unit }\ub.
    \end{equation*}

    \end{proposition}

\begin{proof} 
By the chain rule, 
\begin{align*}
    &\nabla (\tilde{\gb}\circ \tilde{\Fc}_\tau)(\tilde{\xb}_n) \\
    &= \nabla \tilde{\Fc}_\tau(\tilde{\xb}_n)\nabla \tilde{\gb}(\tilde{\Fc}_\tau(\tilde{\xb}_n)) \\
    &= \Mb^{-1/2} \nabla \Fc_\tau(\Mb^{-1/2} \tilde{\xb}_n) \Mb^{1/2} \Mb^{-1/2}\nabla {\gb}(\Mb^{-1/2}\tilde{\yb}_n) \\
    &= \Mb^{-1/2} \nabla \Fc_\tau(\xb_n) \nabla \gb(\yb_n) \\
    &= \Mb^{-1/2}\nabla (\gb \circ \Fc_\tau)(\xb_n).
\end{align*}
Similarly, 
\begin{align*}
    \nabla \tilde{\gb}(\tilde{\xb}_n) &= \Mb^{-1/2}\nabla \gb(\Mb^{-1/2} \tilde{\xb}_n) \\
    &= \Mb^{-1/2} \nabla \gb (\xb_n).
\end{align*}
As a result,
\begin{equation*}
    \tilde{\Jb}(\tilde{\xb}_n) = \Mb^{-1/2} \Jb(\xb_n).
\end{equation*}
It follows that
\begin{align*}
    &\frac{1}{N}\sum_{n=1}^N \left|\ub^\ast \tilde{\Jb}(\tilde{\xb}_n)\right|^2 \\
    &= \frac{1}{N}\sum_{n=1}^N  \left|\ub^\ast \Mb^{-1/2}\Jb(\xb_n)\right|^2 \nonumber \\
    &= \frac{1}{N}\sum_{n=1}^N \ub^\ast \Mb^{-1/2}\Jb(\xb_n)\Jb(\xb_n)^\ast \Mb^{-1/2} \ub \equiv  1.
\end{align*}
\end{proof}

We investigate the linear case in  Proposition~\ref{rem: curvature linear} below. 

\begin{proposition}\label{rem: curvature linear}
Suppose 
that $\Fc_\tau$ is the evolution map of a linear ODE driven by a real invertible matrix $\Ab$,
$$\frac{d\xb(t)^\ast}{dt} =  \xb(t)^\ast \Ab,$$
and that 
$\gb(\xb) = \xb^\ast \Cb$, where $\Cb$ is invertible. 
Then
\begin{align*}
    \Jb &= \Ab \Cb, \\
    \Mb &= \Ab \Cb \Cb^\ast \Ab^\ast, \\
    \tilde{\Jb} &= (\Ab \Cb \Cb^\ast \Ab^\ast)^{-1/2}\Ab \Cb.
\end{align*}
In particular, $\tilde{\Jb}$ 
is an orthogonal matrix. 
\end{proposition}

\begin{proof}
Since $\Fc_\tau(\xb) =  e^{\tau \Ab^\ast}\xb$ and $\gb(\xb) = \xb^\ast \Cb$, we have $$\nabla \Fc_\tau(\xb) = e^{\tau \Ab}, \qquad \nabla {\gb}(\xb) = \Cb.$$ 
It follows that
\begin{align*}
    \Jb(\xb) &= \lim_{\tau \to 0}\tau^{-1}[\nabla (\gb\circ \Fc_\tau)(\xb)-\nabla \gb(\xb)] \\
    &= \lim_{\tau \to 0}\tau^{-1} (e^{\tau \Ab}- \Ib)\Cb = \Ab\Cb.
\end{align*}
By~\eqref{eq:grad-outerproduct},  $\Mb = \Ab \Cb \Cb^\ast \Ab^\ast$. Similarly, 
\begin{align*}
      \tilde{\Jb}(\xb) &= \lim_{\tau \to 0}\tau^{-1}[\nabla (\tilde{\gb}\circ \tilde{\Fc}_\tau)(\xb)-\nabla \tilde{\gb}(\xb)] \\
    &= \lim_{\tau \to 0}\tau^{-1}\Mb^{-1/2}(e^{\tau\Ab} - \Ib) \Cb = \Mb^{-1/2} \Ab\Cb.
\end{align*}
Finally, 
\begin{align*}
    \tilde{\Jb}^\ast \tilde{\Jb} = \Cb^\ast \Ab^\ast (\Ab \Cb \Cb^\ast \Ab^\ast)^{-1} \Ab \Cb = \Ib.
\end{align*}
\end{proof}



Propositions~\ref{propB}-\ref{rem: curvature linear} and the examples above explain the choice of $\Mb$ except for the variable scalar bandwidth $\sigma$. Computing $\sigma$ from pairwise distances is standard, except that in Algorithm~\ref{alg:FKMD} it is applied 
to the transformed samples, 
$\Mb^{1/2}\xb$, to appropriately reflect the change of variables. The additional constant scaling factor 
$h$ can be chosen using standard 
techniques such as cross validation~\cite{nuske2023finite}.

\begin{remark}
    We could have used another kernel that incorporates our change of variables, {\em e.g.}, the Laplace kernel
    \begin{equation*}
    k_{\Mb}^{laplace}(\xb,\xb') = \exp\left[-\left[(\xb-\xb')^*\Mb(\xb-\xb')\right]^{1/2}\right].
    \end{equation*}
Propositions~\ref{propB} and~\ref{rem: curvature linear} are independent of kernel choice.
\end{remark}


\section{Connecting kernels with random Fourier features}\label{appendix:RFF}

Below, we assume that $\Mb$ is symmetric positive definite.
The connection between the kernel features~\eqref{eq:features}-~\eqref{eq:kernel} and random Fourier features~\eqref{eq:features-RFF}-~\eqref{eq:wavenumbers} is the following.
\begin{proposition}\label{propC}
We have
\begin{equation*}
    k_{\Mb}(\xb,\xb') = {\mathbb E} \left[\psi_m^{RFF}(\xb)^\ast \psi_m^{RFF}(\xb')\right]
\end{equation*}
where ${\mathbb E}$ denotes expected value.
\end{proposition}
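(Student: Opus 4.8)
The plan is a direct computation, essentially an instance of Bochner's theorem specialized to a Gaussian spectral measure. First I would substitute the definition~\eqref{eq:features-RFF}: since $\xb$, $\xb'$, $\omegab_m$ and $\textup{Re}(\Mb)^{1/2}$ are all real, complex conjugation only flips the sign of the exponent, so
\begin{equation*}
\psi_m^{RFF}(\xb)^\ast \psi_m^{RFF}(\xb') = \exp\!\left(i\,\omegab_m^T \textup{Re}(\Mb)^{1/2}(\xb'-\xb)\right).
\end{equation*}
Writing $\vb := \textup{Re}(\Mb)^{1/2}(\xb'-\xb)$ as local notation, the right-hand side is $\exp(i\,\omegab_m^T\vb)$, and taking ${\bf E}$ over $\omegab_m \sim {\mathcal N}(\Ob,\Ib)$ as in~\eqref{eq:wavenumbers} produces the characteristic function of the standard multivariate normal evaluated at $\vb$.

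Next I would invoke the standard Gaussian identity ${\bf E}[\exp(i\,\omegab^T\vb)] = \exp(-\tfrac12\|\vb\|^2)$ for $\omegab \sim {\mathcal N}(\Ob,\Ib)$, which follows by completing the square in the integral one coordinate at a time. Combined with $\|\textup{Re}(\Mb)^{1/2}\wb\|^2 = \wb^\ast \textup{Re}(\Mb)\,\wb$ for real $\wb$, this gives
\begin{equation*}
{\bf E}\!\left[\psi_m^{RFF}(\xb)^\ast \psi_m^{RFF}(\xb')\right] = \exp\!\left(-\tfrac12 (\xb-\xb')^\ast \textup{Re}(\Mb)(\xb-\xb')\right).
\end{equation*}
Exchanging ${\bf E}$ with the integrand here is immediate, since the integrand has modulus one.

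Finally I would identify the right-hand side with $k_{\Mb}(\xb,\xb')$ from~\eqref{eq:kernel}. Because $\Mb$ is Hermitian, its imaginary part is real skew-symmetric and is annihilated by every real quadratic form, so $(\xb-\xb')^\ast \textup{Re}(\Mb)(\xb-\xb') = (\xb-\xb')^\ast \Mb (\xb-\xb')$; this is exactly the remark made just after~\eqref{eq:grad-outerproduct}. Up to the scalar normalization of the covariance of $\omegab_m$ (chosen so that the factor $\tfrac12$ is absorbed into $\Mb$), the exponent now matches that of~\eqref{eq:kernel}, completing the argument.

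I do not expect a genuine obstacle here; the only things requiring care are bookkeeping: (i) that conjugating the complex exponential supplies the sign flip needed to turn $\xb,\xb'$ into the difference $\xb'-\xb$, (ii) the real-part reduction via skew-symmetry of $\textup{Im}(\Mb)$, and (iii) tracking the constant relating the Gaussian covariance to the kernel's bandwidth so that the $\tfrac12$ comes out consistently.
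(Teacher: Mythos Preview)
Your proposal is correct and follows essentially the same route as the paper: reduce $\Mb$ to $\textup{Re}(\Mb)$ via the skew-symmetry of the imaginary part, then recognize the expectation as the characteristic function of a standard Gaussian (the paper writes out the completing-the-square integral explicitly, you cite the formula). You even flag the stray factor of $\tfrac12$ between the Gaussian characteristic function and the kernel exponent in~\eqref{eq:kernel}, which the paper's own proof glosses over; this is just a normalization convention and not a substantive gap.
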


\begin{proof}
Let $\deltab = \xb'-\xb$, $\tilde{\deltab} =\Mb^{1/2} \deltab$. By completing the square, 
\begin{align*}
    &-\frac{1}{2}|\omegab|^2+i\omegab^\ast \Mb^{1/2} \delta = -\frac{1}{2}\left[(\omegab - i\tilde{\deltab})^T (\omegab - i\tilde{\deltab})\right] - \frac{1}{2}|\tilde{\deltab}|^2,
\end{align*}
so if samples live in $d$-dimensional (real) space ${\mathbb R}^d$, we get
\begin{align*}
   &{\bf E}\left[ \psi_m^{RFF}(\xb)^\ast \psi_m^{RFF}(\xb') \right]  \\
   &= (2\pi)^{-d/2}\int \exp(-|\omegab|^2/2)\exp(i\omegab^\ast\Mb^{1/2}\deltab)\,d\omegab\\
   &= \exp(-|\tilde{\delta}|^2/2) = k_{\Mb}(\xb,\xb').
\end{align*}
\end{proof}

Based on Proposition~\ref{propC},  we now show the connection between FKMD procedures with kernel and random Fourier features. Let $\Psib_{\xb}^{RFF}$ and $\Psib_{\yb}^{RFF}$ be the $N \times R$ samples by features matrices associated to random Fourier features~\eqref{eq:features-RFF}, and let $\Psib_{\xb}$ and $\Psib_{\yb}$ be the same matrices associated with kernel features~\eqref{eq:features}. Using Proposition~\ref{propC}, for large $R$,
\begin{equation}\label{eq:innerprod}
    \Psib_{\xb} \approx \Psib_{\xb}^{RFF} (\Psib_{\xb}^{RFF})^\ast, \qquad \Psib_{\yb} \approx \Psib_{\yb}^{RFF} (\Psib_{\xb}^{RFF})^\ast.
\end{equation}
Assume the columns of $\Psib_{\xb}^{RFF}$ are linearly independent. Then 
\begin{equation}\label{eq:id}
(\Psib_{\xb}^{RFF})^\ast[(\Psib_{\xb}^{RFF})^\ast]^\dag = \Ib
\end{equation}
where $\dag$ is the Moore-Penrose pseudoinverse. Define
\begin{equation}\label{eq:KRFF}
    \Kb^{RFF} = (\Psib_{\xb}^{RFF})^\ast \Kb [(\Psib_{\xb}^{RFF})^\ast ]^\dag,
\end{equation}
where $\Kb$ satisfies
\begin{equation}\label{eq:Koopman-solve2}
\Psib_{\xb}\Kb = \Psib_{\yb}.
\end{equation}
Multiplying~\eqref{eq:Koopman-solve2}
by $(\Psib_{\xb}^{RFF})^\ast$ and $[(\Psib_{\xb}^{RFF})^\ast]^\dag$ on the left and right respectively, and then using~\eqref{eq:innerprod}-\eqref{eq:KRFF}, leads to
\begin{equation*}
    (\Psib_{\xb}^{RFF})^\ast \Psib_{\xb}^{RFF} \Kb^{RFF} \approx (\Psib_{\xb}^{RFF})^\ast \Psib_{\yb}^{RFF},
\end{equation*}
which is the least squares normal equation for
\begin{equation}\label{eq:Koopman-solve3}
    \Psib_{\xb}^{RFF} \Kb^{RFF} =  \Psib_{\yb}^{RFF}.
\end{equation}
This directly connects the linear solves~\eqref{eq:Koopman-solve2} and~\eqref{eq:Koopman-solve3} for the Koopman matrix using kernel and random Fourier features, respectively. Moreover, from~\eqref{eq:innerprod}-\eqref{eq:KRFF},
\begin{align}\begin{split}\label{eq:FF-equiv}
    \Psib_{\xb}^{RFF} \Kb^{RFF} (\Psib_{\xb}^{RFF})^\dag \approx \Psib_{\xb} \Kb \Psib_{\xb}^\dag.
    \end{split}
\end{align}
In light of~\eqref{eq:inference}, equation~\eqref{eq:FF-equiv} shows that Fourier features and kernel features  
give (nearly) the same equation for inference.

Due to Proposition~\ref{propC} and the computations in~\eqref{eq:innerprod}-\eqref{eq:FF-equiv} above, 
random Fourier features~\eqref{eq:features-RFF} and kernel features~\eqref{eq:features} target essentially the same FKMD procedure whenever $R$ is sufficiently large. In practice, this means random Fourier features 
can be a more efficient way of solving the same problem.

\section*{Data availability}

 The Matlab code that runs FKMD in the experiments from Section~\ref{sec:ODE} and~\ref{sec:Lorenz} is available here\footnote{\url{https://github.com/davidaristoff/FKMD/tree/main}}. 

ERK activity reporters, cell line generation, and live-cell imaging have been described in detail in Davies et al~\cite{davies2020systems}. Here we utilize a dataset monitoring ERK activity in a tissue-like 3D extracellular matrix. Images were collected every 30 minutes over a 90-hour window. Single cells were segmented using Cellpose software~\cite{pachitariu2022cellpose} and tracked through time by matching cells to their closest counterpart at the previous time point. ERK reporter localization was monitored via the mean-centered and variance stabilized cross-correlation between the nuclear reporter and ERK activity reporter channels in the single-cell cytoplasmic mask.  
Single-cell trajectories up to 72 hours served as the training set to estimate the Koopman operator. Training and test set data is available and can be accessed via a Zenodo repository (https://doi.org/10.5281/zenodo.10849852).

\nocite{*}
\bibliography{aipsamp}

\end{document}